\documentclass[a4paper,reqno]{amsart}

\usepackage{amsmath,amsfonts,amsthm,amssymb,latexsym,mathrsfs,enumerate}

\usepackage{hyperref}
\usepackage[all]{xy}

\addtolength{\hoffset}{-1.5cm}
\addtolength{\textwidth}{3cm}

\newcommand{\diag}{\mathrm{diag}}

\newcommand{\ped}{\mathrm{Ped}}

\newcommand{\cu}{\mathrm{Cu}}
\newcommand{\cs}{\mathrm{C}^*}
\newcommand{\aff}{\mathrm{Aff}_+}

\newcommand{\saff}{\mathrm{SAff}_+}

\newcommand{\nn}{\mathbb{N}}

\newcommand{\cc}{\mathbb{C}}

\newenvironment{items}{\begin{list} {$\cdot$} {\setlength{\leftmargin}{0.5cm}}}{\end{list}}

\newtheoremstyle{smallcaps}
    {3pt}                    
    {3pt}                    
    {\itshape}                   
    {}                           
    {\sc}                   
    {.}                          
    {.5em}                       
    {}  
    
\newtheoremstyle{smallcapsdef}
    {3pt}                    
    {3pt}                    
    {}                   
    {}                           
    {\sc}                   
    {.}                          
    {.5em}                       
    {}  

\theoremstyle{smallcaps}

\newtheorem {thm}{Theorem}[section]
\newtheorem {lemma}[thm]{Lemma}
\newtheorem {prop}[thm]{Proposition}
\newtheorem {corollary}[thm]{Corollary}

\theoremstyle {smallcapsdef}

\newtheorem {remark}[thm]{Remark}

\numberwithin{equation}{section}

\begin{document}

\author{Bhishan Jacelon}
\date{\today}
\title{$\mathcal{Z}$-stability, finite dimensional tracial boundaries and continuous rank functions}

\address{Mathematisches Institut\\Einsteinstrasse 62\\48149 M\"unster, Germany} 
\email{b.jacelon@uni-muenster.de}
\thanks{Research supported by the ERC through AdG 267079.}

\begin{abstract}
We observe that a recent theorem of Sato, Toms--White--Winter and Kirchberg--R{\o}rdam also holds for certain nonunital $\cs$-algebras. Namely, we show that an algebraically simple, separable, nuclear, nonelementary $\cs$-algebra with strict comparison, whose cone of densely finite traces has as a base a Choquet simplex with compact, finite dimensional extreme boundary, and which admits a continuous rank function, tensorially absorbs the Jiang--Su algebra $\mathcal{Z}$.
\end{abstract}
\maketitle

\section{Introduction}

As part of the project of investigating regularity properties of (simple, separable, unital, nonelementary, nuclear) $\cs$-algebras, there has been recent progress in showing that, in certain contexts, tensorial absorption of the Jiang--Su algebra $\mathcal{Z}$ (`$\mathcal{Z}$-stability') is an automatic consequence of strict comparison of positive elements. This was proved by Matui and Sato in \cite{Matui:2012qv} in the case of finitely many extremal tracial states, and later extended independently by Sato \cite{Sato:2012fj}, Toms, White and Winter \cite{Toms:2012qy} and Kirchberg and R\o rdam \cite{Kirchberg:2012uq} to include $\cs$-algebras with compact, finite-dimensional extreme tracial boundary. In this note, we point out that this more general result also holds for algebraically simple $\cs$algebras with compact tracial-state-space (and hence for those which admit a continuous rank function, in the sense of \cite{Dadarlat:2010nr}). We make use of Nawata's extension \cite{Nawata:2012fk} of Matui and Sato's techniques to the nonunital setting, of the language of `uniformly tracially large' order zero maps developed in \cite{Toms:2012qy}, and also of the philosophy of \cite{Tikuisis:2012kx}, namely that algebraically simple $\cs$-algebras can often be studied using the machinery developed for their unital cousins.

After establishing notation and tools in Section~\ref{prelim}, we present the main theorem in Section~\ref{mainsec} (following the exposition of \cite{Toms:2012qy} and  \cite{Sato:2012fj}). We then show in Section~\ref{further} that for $\cs$-algebras with a continuous rank function (in particular, those that have strict comparison, almost divisibility and stable rank one), the assumption of compactness of the tracial state space can be removed.

\sub{Acknowledgements} I am grateful to Wilhelm Winter, Karen Strung, Aaron Tikuisis and Stuart White for many helpful discussions.

\section{Preliminaries} \label{prelim}

In this section, we record the background results and terminology needed for the rest of the article.

\sub{Cones and traces} If $C$ is a (pointed) convex cone in a locally convex Hausdorff space, a \emph{base} of $C$ is a convex subset $X$ of $C$ such that $y\in C$ if and only if $y=\alpha x$ for unique $\alpha\ge 0$ and $x\in X$. It is straightforward to show that, if $X$ is a \emph{compact} base of $C$, then the projection $C\backslash \{0\} \to X$ is continuous, and (at least if one can choose seminorms defining the ambient topology to all be strictly positive on $C\backslash\{0\}$) every continuous affine functional on $X$ extends uniquely to a continuous linear functional on $C$. Finally, a compact base $X$ is a \emph{Choquet simplex} precisely when $C$ is a vector lattice. (See \cite{Alfsen:1971hl} or \cite{Phelps:2001rz} for details about cones and simplices.)

Let $A$ be a $\cs$-algebra. We denote by $\ped(A)$ the minimal dense ideal of $A$ (see \cite[Chapter 5]{Pedersen:1979rt}), by $T(A)$ the cone of densely finite lower semicontinuous traces on $A$ (regarded as a subset of the dual of $\ped(A)$ and equipped with the corresponding weak$^*$-topology), and by $T_1(A)$ the space of tracial states on $A$, i.e.\ those elements of $T(A)$ of norm $1$. Note that, if $A$ is simple, then every nonzero element of $T(A)$ is faithful, so in this case any compact base of $T(A)$ has the extension property mentioned in the previous paragraph. In particular, this is the case for the following (which are compact bases of $T(A)$ under the stated hypotheses):
\begin{itemize}
\item $T_1(A)$, whenever it is compact and $A$ has no unbounded traces, and
\item $T_{a\mapsto1}(A):=\{\tau\in T(A) \mid \tau(a)=1\}$, whenever $a\in\ped(A)^+$ is full (see \cite[Proposition 3.4]{Tikuisis:2012yq}).
\end{itemize}
Since $T(A)$ is a lattice (see \cite[Corollary 3.3]{Pedersen:1966zr} and \cite[Theorem 3.1]{Pedersen:1969kq}, also \cite[Theorem 3.3]{Elliott:2009kq}), any compact base of $T(A)$ is also a Choquet simplex.

For separable, simple $A$, we define
\begin{align*}
\aff(T(A))&:=\{f:T(A)\to[0,\infty) \mid \textrm{$f$ linear, continuous, } f(\tau)>0 \:\textrm{for}\: \tau\ne 0\}\\
\saff(T(A))&:=\{f:T(A)\to[0,\infty] \mid f_n\uparrow f\:\textrm{pointwise, }\: f_n\in \aff(T(A))\}.
\end{align*}
Note that the norm map $\|\cdot\|:T(A)\to[0,\infty]$, which we will sometimes denote by $\omega_A$, is an element of $\saff(T(A))$. Moreover, if $A$ has no unbounded traces, then $T_1(A)$ is compact precisely when $\omega_A$ is continuous.

\sub{The Cuntz semigroup}
The Cuntz relation on $M_\infty(A)^+:=\bigcup_{k=1}^\infty M_k(A)^+$ is defined by $a\precsim b$ if $\|a-v_nbv_n^*\| \to 0$ for some $v_n$, and $\sim$ denotes the equivalence relation that symmetrizes it. The Cuntz semigroup $W(A)$ is the set of equivalence classes $[a]$ of elements $a\in M_\infty(A)^+$. It is a positively ordered abelian monoid under the addition $[a]+[b]=[\diag(a,b)]$ and the partial order $[a]\le[b]$ if $a\precsim b$. We refer the reader to \cite{Ara:2009cs} for more information.

Every $\tau\in T(A)$ gives rise to a lower semicontinuous functional $d_\tau:W(A)\to[0,\infty]$, defined by $d_\tau([a])=\lim_{n\to\infty}\tau(a^{1/n})$. (Lower semicontinuity of $d_\tau$ means precisely that $d_\tau([a])=\sup_{\varepsilon>0}d_\tau([(a-\varepsilon)_+])$ for every $a\in M_\infty(A)^+$, where $(a-\varepsilon)_+$ is defined to be the element of $\cs(a)$ (in fact, of $\ped(\cs(a))$) corresponding under functional calculus to $f(t):=\max\{0,t-\varepsilon\}$.)

As in \cite{Dadarlat:2010nr} (see also \cite[Remark 6.0.4]{Robert:2010qy} and \cite[Corollary I.1.4	]{Alfsen:1971hl}), when $A$ is simple and separable we define
\[
\iota: W(A) \to \saff(T(A)) \quad \textrm{by} \quad \iota([a])(\tau):=d_\tau(a) \:\textrm{for}\: [a]\in W(A), \tau\in T(A).
\]
(Note that if $e\in A$ is strictly positive, then $\iota([e])=\omega_A$, i.e.\ $d_\tau(e)=\|\tau\|$ for every $\tau\in T(A)$.)

An exact $\cs$-algebra $A$ is said to have \emph{strict comparison} if, whenever $a,b\in M_\infty(A)^+$ with $d_\tau(a)<d_\tau(b)$ for every $\tau\in T(A)$ such that $d_\tau(b)=1$, then $a\precsim b$. We say that $A$ (or $W(A)$) has \emph{almost divisibility} if for every $y\in W(A)$ and $m\in\nn$ there exists $x\in W(A)$ such that $mx\le y \le (m+1)x$. By results of R{\o}rdam \cite{Rordam:2004kq}, every separable (exact) $\mathcal{Z}$-stable $\cs$-algebra has both of these properties (see \cite[Proposition 3.7]{Winter:2012pi}, the proof of which works equally well for nonunital $\cs$-algebras, and also \cite[Proposition 6.2]{Elliott:2009kq}).

\sub{Tracially large order zero maps}
Recall that a completely positive (c.p.) map has \emph{order zero} if it preserves orthogonality (see \cite{Winter:2009sf}). Given a $\cs$-algebra $A$, we denote by $A_\infty$ the central sequence algebra $A^\infty\cap A':=\ell^\infty(A)/c_0(A)\cap A'$, where $A$ is embedded as the set of constant sequences. Since order zero maps on $M_k:=M_k(\cc), k\in\nn$, are projective (by results of Loring \cite{Loring:1997it}; see also \cite[Lemma 2.1]{Toms:2012qy}), every completely positive and contractive (c.p.c.)  order zero map $\Phi:M_k \to A^\infty$ lifts to a c.p.c.\ order zero map $(\varphi_n)_{n=1}^\infty:M_k\to \ell^\infty(A)$, where each $\varphi_n$ is a c.p.c.\ order zero map $M_k\to A$. As in \cite{Toms:2012qy}, if $A$ is separable and $T_1(A)$ is a nonempty base of $T(A)$, such a $\Phi$ is said to be \emph{uniformly tracially large} if
\[
\lim_{n\to\infty} \inf_{\tau\in T_1(A)} \tau(\varphi_n(1_k))=1
\]
for some (hence every) such lifting $(\varphi_n)_{n=1}^\infty$.

For separable, simple, unital, infinite-dimensional, nuclear $\cs$-algebras with finitely many extremal tracial states, the existence of uniformly tracially large order zero maps is guaranteed by \cite[Lemma 3.3]{Matui:2012qv}. In general, one has the following.

\begin{thm}[Matui--Sato; Nawata] \label{msn}
Let $A$ be a separable, simple, nonelementary, nuclear $\cs$-algebra with strict comparison, such that every trace on $A$ is bounded and $T_1(A)$ is nonempty and compact. Suppose that for every $k\in\nn$, there is a uniformly tracially large c.p.c.\ order zero map $\Phi: M_k\to A_\infty$. Then $A$ is $\mathcal{Z}$-stable.
\end{thm}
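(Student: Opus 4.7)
The strategy is to follow the Matui--Sato approach as adapted to the nonunital setting by Nawata. The plan is to verify a Winter-style characterization of $\mathcal{Z}$-stability: to construct, for each $k\in\nn$, a pair of c.p.c.\ order zero maps $\psi:M_k\to F(A)$ and $\psi':M_{k+1}\to F(A)$ with commuting ranges and $\psi(1_k)+\psi'(1_{k+1})=1_{F(A)}$, where $F(A)$ is the (Kirchberg--Nawata) relative central sequence algebra. The hypothesised uniformly tracially large order zero maps $\Phi:M_k\to A_\infty$ and $\Phi':M_{k+1}\to A_\infty$ provide the backbone; the real work is to manufacture a ``complementary'' order zero map absorbing the tracial deficit of $\Phi'$.

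The key technical step is \emph{property (SI)}. Let $J_A\subset A_\infty$ denote the trace-kernel ideal consisting of sequences $(x_n)$ with $\lim_n\sup_{\tau\in T_1(A)}\tau(x_n^*x_n)=0$; here compactness of $T_1(A)$ and boundedness of every trace are what guarantee that $J_A$ is a well-behaved ideal and that the quotient $A_\infty/J_A$ carries a separating family of limit traces indexed by $T_1(A)$. Property (SI) then asserts that if $e,f\in A_\infty\cap A'$ are positive contractions with $e\in J_A$ and $\liminf_n\inf_{\tau\in T_1(A)}d_\tau(f_n)>0$ along some representative of $f$, then there exists $s\in A_\infty\cap A'$ with $s^*s=e$ and $fs=s$. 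I would prove this by using strict comparison in $A$ to produce, for each $n$, a Cuntz witness $v_n$ with $v_n^*v_n$ close to $e_n$ and $v_nv_n^*$ in the hereditary subalgebra generated by $f_n$, and then invoke a reindexing argument, with nonelementarity of $A$ providing the supply of nearly-central elements needed to make $(v_n)$ commute with $A$ in the limit.

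To conclude, apply (SI) with $e$ a positive contraction representing $1_{F(A)}-\Phi'(1_{k+1})$ (which lies in $J_A$ by uniform tracial largeness) and with $f$ a matrix unit of an auxiliary truncation of $\Phi'$ (so that $f$ satisfies the required uniform lower trace bound by simplicity and strict comparison). The element $s$ produced by (SI) supplies the conjugating ingredient with which to build $\psi'$ orthogonal to the truncated $\psi=\Phi$ on $M_k$, so that $\psi(1_k)+\psi'(1_{k+1})=1_{F(A)}$. Winter's criterion then yields $\mathcal{Z}$-stability. The principal obstacle throughout is the nonunital bookkeeping: defining $A_\infty$, $J_A$ and $F(A)$ without a unit and extracting the witness $s$ without $1-f$ available as a natural complement. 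This is precisely what Nawata's framework handles, using the compactness of $T_1(A)$ to reduce to a setting formally parallel to the unital Matui--Sato argument.
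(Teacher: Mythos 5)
Your overall skeleton (Nawata's algebra $F(A)$, the trace-kernel ideal, property (SI) derived from strict comparison, and uniform tracial largeness making $1-\Phi(1_k)$ tracially small) matches the intended argument, which combines Matui--Sato, Kirchberg--R{\o}rdam and Nawata. But the final assembly has a genuine gap. Property (SI) produces a \emph{single element} $s\in F(A)$ with $s^*s=1_{F(A)}-\overline{\Phi}(1_k)$ and $\overline{\Phi}(e_{11})s=s$, where $\overline{\Phi}:M_k\to F(A)$ is induced by the tracially large map; it does not produce, and your sketch does not explain how to produce, a full c.p.c.\ order zero map $\psi':M_{k+1}\to F(A)$ whose range commutes with that of $\psi=\overline{\Phi}$ and which satisfies the exact identity $\psi(1_k)+\psi'(1_{k+1})=1_{F(A)}$. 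The two hypothesised maps $\Phi:M_k\to A_\infty$ and $\Phi':M_{k+1}\to A_\infty$ are completely unrelated: nothing makes their images commute, and uniform tracial largeness only controls traces approximately, so the exact algebraic complementarity you require cannot be arranged by ``truncating'' and conjugating with the one element $s$ that (SI) yields from data built out of $\Phi'$ alone. The intended route avoids your two-map criterion precisely for this reason: by R{\o}rdam--Winter (Proposition 5.1 of \emph{The Jiang--Su algebra revisited}), $Z_{k,k+1}$ is the universal $\cs$-algebra generated by an order zero image of $M_k$ together with one element $s$ satisfying the two relations above, so the output of (SI) applied to $e=1-\overline{\Phi}(1_k)$ and $f=\overline{\Phi}(e_{11})$ already gives a unital $*$-homomorphism $Z_{k,k+1}\to F(A)$ for every $k$, and $\mathcal{Z}$-stability then follows from Kirchberg's Proposition 4.11 (see also Nawata's Proposition 5.1). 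If you insist on the presentation of $Z_{k,k+1}$ by two commuting order zero maps with units summing to $1$, you would still have to manufacture $\psi'$ inside $F(A)$, which in effect means invoking the R{\o}rdam--Winter presentation anyway.

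Two further points about your sketch of (SI) itself. First, nuclearity is essential there: the step ``strict comparison $\Rightarrow$ (SI)'' goes through excision of (nuclear) c.p.\ maps in small central sequences, whereas your outline uses only strict comparison; and nonelementarity is not what makes the witnesses central --- centrality comes from the excision/central-sequence argument, while nonelementarity is needed elsewhere (e.g.\ for the existence of the tracially large order zero maps in the first place, which here is a hypothesis). Second, the hypothesis on $f$ in (SI) is a uniform lower bound on $\liminf_n\min_{\tau\in T_1(A)}\tau(f_n^m)$ for \emph{every} $m$, not merely positivity of $d_\tau(f_n)$; your formulation is strictly weaker than what is proved in the literature, so as stated your (SI) is not the available tool (though in the application $f=\overline{\Phi}(e_{11})$ does satisfy the stronger condition, via the order zero functional calculus and tracial largeness).
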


In the unital case, strict comparison is used in \cite{Matui:2012qv} to prove that every c.p.\ map $A\to A$ can be excised in small central sequences, and hence that $A$ has property (SI) (see \cite[Definition 2.1]{Matui:2012qv} and \cite[Definition 4.1]{Matui:2012qv} for the relevant definitions). As presented in \cite{Matui:2012qv}, $\Phi$ plays a role in this part of the argument; namely, to ensure that the set of excisable c.p.\ maps is closed under finite sums. However, it is demonstrated in \cite{Kirchberg:2012uq} that this is unnecessary: compare \cite[Proposition 5.9]{Kirchberg:2012uq} with \cite[Lemma 3.1]{Matui:2012qv} (this observation was made by the referee of the published version of this article). Hence, $\Phi$ is only needed to show that $\mathcal{Z}$-stability can be deduced from property (SI): when (SI) is applied to certain central sequences obtained from $\Phi$, one exhibits in $A_\infty$ the order zero relations for the dimension drop algebra $Z_{k,k+1}$ described in \cite[Proposition 5.1]{Rordam:2009qy}; by \cite[Proposition 2.2]{Toms:2005kq}, this implies $\mathcal{Z}$-stability.

If $A$ does not necessarily have a unit, then it is shown in \cite{Nawata:2012fk} that the appropriate nonunital version of property (SI) holds (see \cite[Section 5]{Nawata:2012fk}). The $Z_{k,k+1}$-relations are then witnessed not in $A_\infty$ but in the quotient $F(A)$ of $A_\infty$ by the left annihilator of $A$, first defined in \cite{Kirchberg:2006fk}; from this, $\mathcal{Z}$-stability follows (see \cite[Proposition 4.11]{Kirchberg:2006fk} and also \cite[Proposition 5.1]{Nawata:2012fk}).

\sub{Algebraically simple $\cs$-algebras} A simple $\cs$-algebra is algebraically simple precisely when $A=\ped(A)$ (in particular, if $A$ is unital). It follows from \cite[Proposition 5.6.2]{Pedersen:1979rt}, \cite[Lemma 5.6]{Cuntz:1979fv} and Brown's theorem \cite{Brown:1977kq} that every simple, $\sigma$-unital $\cs$-algebra is stably isomorphic to an algebraically simple $\cs$-algebra. Such algebras are tracially well behaved in the following sense.

\begin{prop} \label{algsim}
The following hold for a separable algebraically simple $\cs$-algebra $A$.
\begin{enumerate}[(i)]
\item \label{a} Every trace on $A$ is bounded (so $T_1(A)$ is a base of $T(A)$).
\item \label{b} The weak$^*$-closure of $T_1(A)$ in $A^*$ does not contain $0$; equivalently,
\[
\inf_{\tau\in T_1(A)}\tau(a)>0 \quad \textrm{for every }\: a\in A^+\backslash\{0\}.
\]
\item \label{c} Suppose that $T_1(A)$ is nonempty and compact. For any strictly positive continuous affine functional $f$ on $T_1(A)$ and for any $\varepsilon>0$, there exists $a\in A^+$ with $\tau(a)=f(\tau)$ for every $\tau\in T_1(A)$ and $\|a\|<\|f\|+\varepsilon$.
\item \label{d} Let $A$ be as in (iii). Then
\[
\lim_{n\to\infty} \sup_{\tau\in T_1(A)} |\tau(e_na)-\tau(e_n)\tau(a)| = 0
\]
for any central sequence $(e_n)_{n=1}^\infty\in A_\infty$ and any $a\in A$.
\item \label{e} Suppose that $A$ is as in (iii) and is also nuclear. Then for every closed subset $X\in \partial_e(T_1(A))$ and mutually orthogonal positive functions $f_1,\ldots,f_N\in C(X)$ of norm $\le 1$, there exist central sequences $(a_{i,n})_{n=1}^\infty$ of positive contractions in $A$ such that
\[
\lim_{n\to\infty} \sup_{\tau\in X} |\tau(a_{i,n})-f_i(\tau)|=0 \quad\textrm{and}\quad \lim_{n\to\infty}\|a_{i,n}a_{j,n}\|=0 \: \textrm{ for }\: i\ne j.
\]
\end{enumerate}

\end{prop}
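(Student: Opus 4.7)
The plan is to prove (i)--(v) in the order stated, treating (i)--(iv) as structural preparations and leaving the nuclearity-based construction of (v) for last.

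Part (i) follows from a standard Baire category argument: since $A=\ped(A)$, each $\tau\in T(A)$ is everywhere finite and lower semicontinuous on $A^+$, so the closed convex sets $\{a\in A_1^+:\tau(a)\le n\}$ exhaust $A_1^+$; Baire then produces boundedness near some point, and the positive cone structure extends the bound to all of $A_1^+$. For (ii), the equivalence $0\notin\overline{T_1(A)}^{w^*}\Leftrightarrow \inf_\tau\tau(a)>0$ is a straightforward Banach--Alaoglu argument (any $w^*$-cluster point of $T_1(A)$ is a densely finite positive trace, which on the simple algebra $A$ must be either $0$ or faithful). For the inequality I would invoke the Tikuisis--Toms result \cite[Proposition 3.4]{Tikuisis:2012yq}: for any nonzero $a\in A^+$, the set $T_{a\mapsto 1}(A)$ is a compact base of $T(A)$, hence $w^*$-bounded in $A^*$ by Banach--Steinhaus with some finite norm $C_a$; then writing any $\tau\in T_1(A)$ as $\tau(a)\cdot(\tau/\tau(a))$ yields $1=\|\tau\|\le C_a\tau(a)$, i.e.\ $\tau(a)\ge 1/C_a$.

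For (iii) I would combine a Cuntz--Pedersen--Thomsen-style surjectivity statement---the map $A^+\to\aff(T_1(A))$, $a\mapsto\widehat{a}$, has image containing all strictly positive continuous affine functionals when $T_1(A)$ is a compact Choquet simplex---with a functional calculus truncation $a\wedge(\|f\|+\varepsilon)$ and an iterative correction. The correction converges because $f$ is bounded below on the compact $T_1(A)$, so the truncation perturbs tracial values by a controllably small amount. Part (iv) reduces pointwise, at each extremal $\tau\in\partial_e T_1(A)$, to the classical GNS fact that central sequences in the finite factor $\pi_\tau(A)''$ have asymptotically vanishing $L^2(\tau)$-variance, so that Cauchy--Schwarz yields $\tau(e_na)-\tau(e_n)\tau(a)\to 0$ at each extremal $\tau$; uniformity on the compact simplex $T_1(A)$ then follows by the barycentric integral representation of non-extremal traces together with an equicontinuity/Dini-type argument on the continuous functions $\tau\mapsto\tau(e_n^2)-\tau(e_n)^2$.

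Part (v) is the main obstacle. Following the blueprints of Kirchberg--R{\o}rdam and Toms--White--Winter, adapted to the algebraically simple setting via the techniques of \cite{Tikuisis:2012kx, Nawata:2012fk}, the strategy is to use nuclearity of $A$ to build, for any orthogonal $f_1,\ldots,f_N\in C(X)^+$ with $\|f_i\|\le 1$, mutually orthogonal positive contractions in $A_\infty$ whose tracial evaluations recover $f_i(\tau)$ uniformly over $\tau\in X$. Concretely, one constructs a c.p.c.\ order zero map $\Phi:C_0(X)\to A_\infty$ with the desired tracial behaviour; orthogonality of the resulting $a_{i,n}$ then follows automatically from the order zero property, while the uniform tracial convergence uses (iv), compactness of $X\subseteq\partial_e T_1(A)$, and the c.p.\ approximations afforded by nuclearity. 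I expect the main difficulty to be threading the nuclearity-based construction through the nonunital ambient algebra while maintaining the uniform tracial estimates; here the ``algebraic simplicity as unital surrogate'' philosophy of \cite{Tikuisis:2012kx}, together with parts (i)--(iv), will be essential.
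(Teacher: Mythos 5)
Parts (i) and (ii) of your sketch are fine, and essentially coincide with what the paper outsources to \cite{Tikuisis:2012kx} and \cite{Tikuisis:2012yq}. The serious problems begin with (iv). First, the ``classical GNS fact'' you invoke is false: central sequences in a finite factor need not have asymptotically vanishing $L^2$-variance (in a UHF algebra, the exactly central projections $e_n=1^{\otimes n}\otimes p\otimes 1\otimes\cdots$ of trace $\tfrac12$ have $\tau(e_n^2)-\tau(e_n)^2=\tfrac14$ for all $n$); the correct pointwise statement at an extremal trace comes instead from the fact that weak cluster points of $\pi_\tau(e_n)$ commute with $\pi_\tau(A)''$, hence are scalars in that factor. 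Second, and more fundamentally, your passage from extremal traces to all of $T_1(A)$ cannot work: $\tau\mapsto\tau(e_na)$ is affine but $\tau\mapsto\tau(e_n)\tau(a)$ is not, so barycentric decomposition does not transfer the estimate, and there is no monotonicity or equicontinuity available for a Dini-type argument. Indeed, uniformity over all of $T_1(A)$ genuinely fails as soon as two extremal traces are separated by a central sequence --- which is exactly what part (v) produces: if $\tau_1(e_n)\to 1$ and $\tau_2(e_n)\to 0$, then at $\tau=\tfrac12(\tau_1+\tau_2)$ the quantity $|\tau(e_na)-\tau(e_n)\tau(a)|$ tends to $\tfrac14|\tau_1(a)-\tau_2(a)|$. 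The statement is applied in the paper, and appears in \cite[Lemma 4.2(i)]{Sato:2012fj}, with the supremum taken over (closed subsets of) $\partial_e(T_1(A))$, and establishing uniformity \emph{there} is the real content: the paper's proof consists of running Sato's argument verbatim and checking that the only unital ingredients, \cite[Corollary 3.3]{Sato:2012fj} and \cite[Proposition 4.1]{Sato:2012fj}, survive in the nonunital setting (unitaries taken in the unitization, and part (iii) standing in for Proposition 4.1). Your sketch does not engage with that machinery at all.

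For (v) you do not offer a proof: constructing a c.p.c.\ order zero map $C_0(X)\to A_\infty$ (equivalently, central sequences realizing the $f_i$ tracially, uniformly on $X$) \emph{is} the assertion, and saying that nuclearity, (iv) and compactness of $X$ ``should'' yield it, with the main difficulty merely ``expected'' to lie in the nonunital ambient algebra, leaves the entire construction missing; the paper again proceeds by transporting \cite[Lemma 4.2(ii)]{Sato:2012fj} word for word, with the same two nonunital repairs as above. Finally, in (iii) your plan presupposes the hard part: a Cuntz--Pedersen-type surjectivity with norm control in the nonunital, algebraically simple setting is precisely what must be established, and your stated reason for convergence of the truncation-and-correction scheme (that truncation perturbs tracial values by little because $f$ is bounded below) is not valid --- the tracial mass of a representative lying above the cut-off is not controlled by a lower bound on $f$. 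The paper handles (iii) by observing that Lin's proof of \cite[Theorem 9.3]{Lin:2007qf} goes through once one notes that unitality enters only via \cite[Corollary 6.4]{Cuntz:1979fv}, which holds for algebraically simple algebras. In summary, (i) and (ii) are sound, but (iii)--(v) as proposed rest on unjustified or incorrect steps and do not amount to a proof.
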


\begin{proof}
See \cite[Section 2]{Tikuisis:2012kx} for the proofs of the first two assertions. For the remaining three, the assumption that $T_1(A)$ is compact ensures that every continuous affine functional on $T_1(A)$ extends to a continuous linear functional on $T(A)$. (Moreover, by \cite[Theorem II.3.12]{Alfsen:1971hl}, if $X\subset \partial_e(T_1(A))$ is closed, then every continuous function on $X$ extends to a continuous affine functional on $T_1(A)$ of the same norm.) Then, (\ref{c}) is proved exactly as in \cite[Theorem 9.3]{Lin:2007qf}; Lin's theorem is stated for simple, unital $\cs$-algebras, but this assumption is only needed to invoke \cite[Corollary 6.4]{Cuntz:1979fv}, which holds for algebraically simple $\cs$-algebras in general. The final two statements are the same as \cite[Lemma 4.2 (i)]{Sato:2012fj} and \cite[Lemma 4.2 (ii)]{Sato:2012fj} respectively, and are proved in exactly the same way. Note in particular that only when appealing to \cite[Corollary 3.3]{Sato:2012fj} or \cite[Proposition 4.1]{Sato:2012fj} is a unit being used. However, the proof of the former works just as well for nonunital algebras (with the unitaries taken in the unitization), and the latter holds for algebraically simple algebras in light of (\ref{c}). 
\end{proof}

\begin{remark}
It can be shown from the results of \cite{Blackadar:1980zr} (see also \cite[Proposition 5.3]{Jacelon:2010fj}) that for every infinite-dimensional metrizable Choquet simplex $X$ with compact extreme boundary $\partial_e(X)$, there exists an algebraically simple AF algebra $A$ such that $T(A)$ has a base affinely homeomorphic to $X$, yet $T_1(A)$ is not compact.\end{remark}

\section{The main theorem} \label{mainsec}

\begin{thm} \label{main}
Let $A$ be a separable, algebraically simple, nonelementary, nuclear $\cs$-algebra and suppose that $T_1(A)$ and $\partial_e(T_1(A))$ are nonempty and compact, and $\partial_e(T_1(A))$ has finite covering dimension. Then $A$ admits a uniformly tracially large order zero map $\Phi:M_k\to A_\infty$ for every $k\in\nn$. In particular, such $\cs$-algebras are $\mathcal{Z}$-stable whenever they have strict comparison.
\end{thm}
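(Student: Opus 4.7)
The approach splits into two stages: constructing, for each $k \in \nn$, a uniformly tracially large c.p.c.\ order zero map $\Phi: M_k \to A_\infty$, and then deducing $\mathcal{Z}$-stability (under the additional hypothesis of strict comparison) from Theorem \ref{msn}. For the second stage, all hypotheses of Theorem \ref{msn} are in place: separability, simplicity, nonelementariness, nuclearity, strict comparison, and nonemptiness/compactness of $T_1(A)$ are either given or immediate from algebraic simplicity; boundedness of every trace is supplied by Proposition \ref{algsim}(\ref{a}). So the substantive task is the construction of the order zero maps. A standard diagonal argument in the sequence algebra $A_\infty$ reduces to the following $\varepsilon$-approximate version: for every $\varepsilon > 0$ and $k \in \nn$, exhibit a c.p.c.\ order zero map $\Phi^\varepsilon: M_k \to A_\infty$ with $\tau(\Phi^\varepsilon(1_k)) > 1 - \varepsilon$ for every $\tau \in T_1(A)$.

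For the construction of $\Phi^\varepsilon$ I would follow the coloring strategy of \cite[Section 5]{Toms:2012qy} and \cite{Sato:2012fj}, replacing unital inputs by the algebraically simple counterparts given in Proposition \ref{algsim}. Set $X = \partial_e(T_1(A))$ and $d = \dim X$. Since $\dim X = d$, one may choose a sufficiently fine partition of unity $\{f_\lambda\}_{\lambda \in \Lambda}$ on $X$ together with a partition $\Lambda = C_1 \sqcup \cdots \sqcup C_{d+1}$ into color classes such that within each class the functions have pairwise disjoint supports. For each color $c$, Proposition \ref{algsim}(\ref{e}) applied to the orthogonal family $\{f_\lambda\}_{\lambda \in C_c}$ yields pairwise orthogonal central sequences $(a_{\lambda, n})$ of positive contractions in $A$ with $\tau(a_{\lambda, n}) \to f_\lambda(\tau)$ uniformly on $X$. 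Using nuclearity together with the local factor structure $\pi_\tau(A)''$ at each extremal trace, one then builds approximate matrix units for $M_k$ in $A$ pairing coherently with these central sequences; this upgrades each $(a_{\lambda, n})$ to a c.p.c.\ order zero map $\psi_\lambda: M_k \to A_\infty$ whose unit is tracially close to $f_\lambda$. Summing within a color (legitimate by orthogonality of supports and Proposition \ref{algsim}(\ref{d})) yields c.p.c.\ order zero maps $\Phi_c: M_k \to A_\infty$ with $\tau(\Phi_c(1_k)) \approx \sum_{\lambda \in C_c} f_\lambda(\tau)$. A further reindexing in $A_\infty$, exploiting the colored structure of the cover, arranges for the $\Phi_c(1_k)$ to be pairwise orthogonal across colors, after which $\Phi^\varepsilon := \sum_c \Phi_c$ is a c.p.c.\ order zero map $M_k \to A_\infty$ with $\tau(\Phi^\varepsilon(1_k)) \approx \sum_\lambda f_\lambda(\tau) = 1$.

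The main obstacle I anticipate is the middle stage: simultaneously producing approximate matrix units for $M_k$ in $A$ that are coherent with the central sequences from Proposition \ref{algsim}(\ref{e}) and that correctly realize the prescribed tracial values uniformly over the compact supports of each $f_\lambda$. Here the full force of the hypotheses is needed; nuclearity ensures that the matrix units exist locally, compactness of $\partial_e(T_1(A))$ and its finite covering dimension keep the combinatorics of the cover manageable, and algebraic simplicity (via Proposition \ref{algsim}) enables the same quantitative lifting arguments that are available in the unital setting of \cite{Toms:2012qy} and \cite{Sato:2012fj}.
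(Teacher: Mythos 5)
Your overall strategy (local order zero maps attached to extremal traces, glued along a finite, dimension-controlled cover of $\partial_e(T_1(A))$ using the central sequences of Proposition~\ref{algsim}(\ref{e})) is the right family of arguments, and the colored-partition-of-unity variant you describe (as in \cite{Toms:2012qy} and \cite{Kirchberg:2012uq}) is a legitimate alternative to the induction on dimension that the paper borrows from \cite{Sato:2012fj}. However, there are two genuine gaps. First, the step you flag as the ``main obstacle'' is the load-bearing input and you leave it unresolved: what is needed is that for each $\tau\in\partial_e(T_1(A))$ there is a c.p.c.\ order zero map $\varphi_\tau\colon M_k\to A$, approximately central on the prescribed finite set, with $\tau(\varphi_\tau(1_k))>1-\varepsilon$; this is exactly \cite[Lemma 3.3]{Matui:2012qv} (nonunital versions: \cite[Lemma 5.9]{Nawata:2012fk}, \cite[Lemma 2.9]{Toms:2012qy}), and the crucial observation that the inequality persists on an open neighbourhood $U_\tau$, so that compactness of $\partial_e(T_1(A))$ yields a finite subcover to which the partition of unity is subordinated, does not appear in your sketch. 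You instead speak of upgrading the central sequences $(a_{\lambda,n})$ themselves into order zero maps ``whose unit is tracially close to $f_\lambda$''; the actual mechanism is to compress the fixed maps $\varphi_{\tau_{l(j)}}$ by the central sequences, $x\mapsto a_{j,n}^{1/2}\varphi_{\tau_{l(j)}}(x)a_{j,n}^{1/2}$, and to use Proposition~\ref{algsim}(\ref{d}) to see that the trace of the unit is approximately $f_j(\tau)\,\tau(\varphi_{\tau_{l(j)}}(1_k))$.

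Second, your final assembly step fails as stated. You cannot, by ``a further reindexing in $A_\infty$'', arrange the $\Phi_c(1_k)$ to be pairwise orthogonal across colours: functions from different colour classes have overlapping supports, and for $\tau$ in such an overlap the asymptotic factorization of Proposition~\ref{algsim}(\ref{d}), applied along the diagonalisation, forces $\tau\bigl(\Phi_c(1_k)\Phi_{c'}(1_k)\bigr)\approx\tau\bigl(\Phi_c(1_k)\bigr)\tau\bigl(\Phi_{c'}(1_k)\bigr)>0$, so the norms of these products are bounded away from zero no matter how you reindex; hence $\sum_c\Phi_c$ is not automatically order zero. What reindexing can buy is commutation, and the correct route --- the one the paper takes --- is to arrange that the maps from different colours (respectively, induction levels) have commuting images and that the sum of their units is a contraction (this is where Nawata's estimate (\ref{naw}) and the chosen approximate unit enter in the nonunital setting), and then to invoke Proposition~\ref{criterion} to produce a single c.p.c.\ order zero map whose value at $1_k$ is that sum. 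Without Proposition~\ref{criterion} (or an equivalent) the construction does not close. A minor further point: your largeness estimate is only over $\partial_e(T_1(A))$; to obtain uniform tracial largeness over $T_1(A)$ you still need the Krein--Milman theorem.
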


We will use the following method of building up large order zero maps from $M_k$. (See \cite[Lemma 7.6]{Kirchberg:2012uq}, also \cite[Corollary 2.3]{Sato:2012fj}.)

\begin{prop} \label{criterion}
Let $B$ be a $\cs$-algebra, $k\ge 2$, and $\varphi_1,\ldots,\varphi_N:M_k\to B$ c.p.c.\ order zero maps whose images commute, and such that $\|\varphi_1(1_k)+\cdots+\varphi_N(1_k)\|\le 1$. Then there exists a c.p.c.\ order zero map $\psi:M_k\to B$ with $\psi(1_k)=\varphi_1(1_k)+\cdots+\varphi_N(1_k)$.
\end{prop}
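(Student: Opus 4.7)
My plan is to apply the Winter--Zacharias structure theorem for c.p.c.\ order zero maps to each $\varphi_i$, writing $\varphi_i(x) = h_i\pi_i(x)$ where $h_i := \varphi_i(1_k) \in B$ and $\pi_i\colon M_k \to \mathcal{M}(\cs(\varphi_i(M_k))) \subseteq B^{**}$ is a $*$-homomorphism whose image commutes with $h_i$. Since $\pi_i(M_k) \subseteq \varphi_i(M_k)''$ inside $B^{**}$, the commuting-images hypothesis propagates immediately to: the $h_i$'s mutually commute, each $h_j$ commutes with every $\pi_i(M_k)$, and the $\pi_i(M_k)$ themselves mutually commute.

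Set $h := h_1 + \cdots + h_N$, a positive contraction in $B$. The strategy is to produce $\psi$ in the form $\psi(x) = h\,\pi(x)$, where $\pi\colon M_k \to \mathcal{M}(\overline{hBh})$ is a unital $*$-homomorphism with image commuting with $h$. Once such a $\pi$ is at hand, the order-zero property follows at once (for $xy = 0$, $\psi(x)\psi(y) = h^2\pi(xy) = 0$), as do $\psi(1_k) = h$ and $\psi(x) \in \overline{hBh} \subseteq B$.

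The principal task---and the part I expect to be the main obstacle---is the construction of $\pi$. The commuting $\pi_i(M_k)$'s canonically assemble into a $*$-homomorphism $\tilde\pi\colon M_k^{\otimes N} \to B^{**}$ via $\tilde\pi(x_1 \otimes \cdots \otimes x_N) := \pi_1(x_1) \cdots \pi_N(x_N)$; however, pre-composing with any unital diagonal $M_k \hookrightarrow M_k^{\otimes N}$ produces a map whose support is the intersection $\bigwedge p_i$ of the $p_i := \pi_i(1_k)$, whereas the support of $h$ is the (generally larger) projection $\bigvee p_i$. The delicate point is therefore to \emph{extend} this construction across the discrepancy between these support projections so as to land in $\mathcal{M}(\overline{hBh})$ rather than merely in $B^{**}$; this is precisely the force that makes $\psi(x)$ land in $B$, for taking instead the naive combination $\pi(x) := \sum_i q_i\pi_i(x)$ with orthogonal $q_i$'s summing to $\bigvee p_i$ readily produces discontinuous (and hence non-$B$) values of $\psi$ on examples.

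I plan to handle this extension by induction on $N$, reducing to the case $N = 2$ and then working explicitly inside $D := \cs(\varphi_1(M_k), \varphi_2(M_k))$, in which $h_1$ and $h_2$ are central. There, $\pi$ can be realised by a fibrewise argument over the spectrum of $\cs(h_1, h_2) \subseteq Z(D)$---each fibre being either $M_{k^2}$ (where both $h_i$'s are nonzero) or $M_k$ (where one $h_i$ vanishes), in which a unital embedding of $M_k$ always exists---and then patched globally using the commuting-images relations. See \cite[Lemma 7.6]{Kirchberg:2012uq} or \cite[Corollary 2.3]{Sato:2012fj}.
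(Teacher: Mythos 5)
There is a genuine gap, and it sits exactly where you place "the principal task." Note first that the paper itself does not prove this proposition at all: it simply cites \cite[Lemma 7.6]{Kirchberg:2012uq} and \cite[Corollary 2.3]{Sato:2012fj}, and your write-up ultimately does the same, since after setting up the problem you describe the decisive step only as ``a fibrewise argument \ldots patched globally'' and then point back to those very references. Everything before that is fine and correctly diagnoses the obstacle: the commutation relations do pass to the supporting homomorphisms $\pi_i$, the reduction to finding a unital $*$-homomorphism $\pi\colon M_k\to\mathcal{M}(\overline{hBh})$ commuting with $h$ is legitimate (indeed it is equivalent, since the supporting homomorphism of any $\psi$ with $\psi(1_k)=h$ multiplies $\overline{hBh}$), and the naive combination $\sum_i q_i\pi_i(x)$ with cutting projections does fail to land in $B$. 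But identifying the obstacle is not overcoming it, and the patching step is the entire mathematical content of the proposition.

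Concretely, fibrewise existence of unital embeddings of $M_k$ into $M_{k^2}$ or $M_k$ is trivial; the difficulty is making a \emph{continuous} choice across the locus where one of $h_1,h_2$ vanishes, because there the embedding is forced to degenerate to exactly $\pi_1$ (resp.\ $\pi_2$), while on the region where both are nonzero one must interpolate between the two endpoint embeddings $x\mapsto x\otimes 1_k$ and $x\mapsto 1_k\otimes x$. A hard orthogonal splitting is impossible whenever the joint spectrum of $(h_1,h_2)$ connects the two axes (continuity would force a disconnection), so a genuine interpolation mechanism is required. In the cited proofs this is supplied by the fact that the two factor embeddings $M_k\to M_k\otimes M_k$ are unitarily equivalent, hence joined by a norm-continuous path of unital embeddings (connectedness of the unitary group of $M_{k^2}$); this path is then implemented via functional calculus in the commuting pair $(h_1,h_2)$, and one must verify three things your sketch never addresses: that the resulting elements lie in $B$ and not merely in $B^{**}$, that the map is \emph{exactly} order zero (the cross terms where the interpolation meets the pure regions must vanish identically, not approximately), and that the value at $1_k$ is exactly $h_1+h_2$. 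One should also record that the $N=2$ construction lands in $\cs(\varphi_1(M_k)\cup\varphi_2(M_k))$, so that the output still commutes with the remaining images and the induction on $N$ can proceed. Without these ingredients the proposal is a plan plus a citation, which is no more than the paper's own treatment of the statement.
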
  

It will moreover be sufficient to verify the conditions of the proposition approximately. That is, if $A$ is separable and there exists $N\in\nn$ such that, for every finite subset $F\subset A$ and every tolerance $\varepsilon>0$, there are c.p.\ maps $\varphi_{l,n}$, $l=1,\ldots,N$, $n\in\nn$, such that $(\varphi_{l,n})_{n=1}^\infty:M_k\to A^\infty$ are c.p.\ order zero maps with commuting images and satisfy
\begin{align} \label{diagonal}
&\limsup_{n\to\infty}\left\|\sum_{l=1}^{N}\varphi_{l,n}(1_k)\right\| \le 1,\\ \nonumber
&\liminf_{n\to\infty}\inf_{\tau\in T_1(A)}\sum_{l=1}^{N} \tau(\varphi_{l,n}(1_k)) \ge 1-\varepsilon \quad\textrm{and}\\ \nonumber
&\limsup_{n\to\infty}\|[\varphi_{l,n}(x),a]\| \le \varepsilon\|x\| \:\textrm{ for }\: x\in M_k,a\in F,l=1,\ldots,N,
\end{align}
then, by a diagonal argument and Proposition~\ref{criterion}, there exists a uniformly tracially large order zero map $\psi:M_k\to A_\infty$.

\begin{proof}[Proof of Theorem~\ref{main}] The proof is exactly the same as in \cite[Lemma 3.5]{Toms:2012qy} (for the zero dimensional case) and \cite[Proposition 5.1]{Sato:2012fj} (for the general case), so we will try to just give a brief outline. Most of the work needed to carry these arguments over to nonunital $\cs$-algebras is either contained in Proposition~\ref{algsim} or has already been done by Nawata in \cite{Nawata:2012fk}. In particular, we will appeal when necessary to \cite[Proposition 5.3]{Nawata:2012fk}, which says the following: if $A$ is a $\cs$-algebra with a countable approximate unit $(h_n)_{n=1}^\infty$ and with $T_1(A)$ nonempty and compact, then
\begin{equation} \label{naw}
\lim_{n\to\infty} \sup_{\tau\in T_1(A)} |\tau(h_ng_n)-\tau(g_n)|=0
\end{equation} 
for any sequence $(g_n)_{n=1}^\infty$ of positive contractions in (the unitization of) $A$.

Finally, by $\dim$ we mean the topological covering dimension, which for separable metric spaces coincides with the (small) inductive dimension (see \cite{Engelking:1978uq}).

Fix a finite set $F\subset A$ and a tolerance $\varepsilon>0$, and suppose that $X\subset \partial_e(T_1(A))$ is closed. By the argument of \cite[Lemma 3.3]{Matui:2012qv} (see also \cite[Lemma 5.9]{Nawata:2012fk} and \cite[Lemma 2.9]{Toms:2012qy}), for every $\tau\in\partial_e(T_1(A))$ there exists a c.p.c.\ order zero map $\varphi_\tau :M_k\to A$ such that $\|[\varphi_\tau(x),a]\|<\varepsilon\|x\|$ for every $x\in M_k, a\in F$ and $\tau(\varphi_\tau(1_k))>1-\varepsilon$. The latter inequality holds on an open neighbourhood $U_\tau$ of $\tau$ in $\partial_e(T_1(A))$, and by compactness there exist $\tau_1,\ldots,\tau_M\in\partial_e(T_1(A))$ such that $X$ is covered by $\mathcal{U}:=\{U_{\tau_1},\ldots,U_{\tau_M}\}$. 

\sub{The zero-dimensional case} Suppose that $\dim X=0$. Then there exists a finite cover $\mathcal{V}=\{V_1,\ldots,V_N\}$ that refines $\mathcal{U}$ and whose elements are pairwise disjoint closed sets. For each $j\in\{1,\ldots,N\}$, let $f_j: X \to [0,1]$ be continuous with $f_j|_{V_j}=1$ and $f_j|_{\bigcup_{i\ne j}V_i}=0$. By Proposition~\ref{algsim}(\ref{e}), there exist central sequences $(a_{i,n})_{n=1}^\infty$ of positive contractions of $A$, $i=1,\ldots,N$, with
\begin{equation} \label{rep}
\lim_{n\to\infty} \sup_{\tau\in X} |\tau(a_{i,n})-f_i(\tau)|=0 \quad\textrm{and}\quad \lim_{n\to\infty}\|a_{i,n}a_{j,n}\|=0 \: \textrm{for } i\ne j.
\end{equation}
For each $j$, let $l(j)$ be such that $V_j\subset U_{\tau_{l(j)}}$. Define $\psi_n:M_k\to A$ by $\psi_n(x):=\sum_{j=1}^N a_{j,n}^{1/2}\varphi_{\tau_{l(j)}}(x) a_{j,n}^{1/2}$. Then $\psi:=(\psi_n)_{n=1}^\infty:M_k\to A^\infty$ is c.p.c.\ order zero (since each $\varphi_{\tau_{l(j)}}$ is, and the $(a_{j,n})_{n=1}^\infty$ are pairwise orthogonal). Moreover, for $x\in M_k$ and $a\in F$ we have
\[
\limsup_{n\to\infty} \|[\psi_n(x),a]\| \le \max_{j\in 1,\ldots,N} \|[\varphi_{\tau_{l(j)}}(x),a]\| < \varepsilon\|x\|,
\]
(once again using pairwise orthogonality of the $(a_{j,n})_{n=1}^\infty$), and for each $j$ and $\rho\in V_j$, we have $\rho(\varphi_{\tau_{l(j)}}(1_k))>1-\varepsilon$ (since $V_j\subset U_{\tau_{l(j)}}$). From this latter inequality, together with Proposition~\ref{algsim}(\ref{d}) and the fact that, by (\ref{rep}), $\lim_{n}\inf_{\rho\in V_j}\rho(a_{j,n})=\inf_{\rho\in V_j}f_j(\rho)=1$, it follows that
\[
\liminf_{n\to\infty} \inf_{\rho\in X}\rho(\psi_n(1_k))\ge1-\varepsilon.
\]
If $X=\partial_e(T_1(A))$, then by the Krein--Milman theorem, the conditions of (\ref{diagonal}) are satisfied.

\sub{The general case} The argument is by induction on the dimension of closed subsets of $\partial_e(T_1(A))$. Let $c$ be an integer with $1\le c\le d:=\dim\partial_e(T_1(A))$ and let $X\subset \partial_e(T_1(A))$ be closed with $\dim(X)=c$. Taking a suitable refinement $\mathcal{V}=\{V_1,\ldots,V_N\}$ of $\mathcal{U}$, and arguing as in \cite{Sato:2012fj}, we obtain the following data:
\begin{items}
\item a closed subset $X_0$ of $X$ of dimension $\le c-1$ (this uses the notion of inductive dimension; $X_0$ is the union of boundaries of elements of $\mathcal{V}$);
\item inductively, c.p.c.\ order zero maps $\psi_{l,n}:M_k\to A$, $l=0,\ldots,c-1,n\in\nn$ such that
\begin{align} \label{induction}
&\limsup_{n\to\infty}\left\|\sum_{l=0}^{c-1}\psi_{l,n}(1_k)\right\| \le 1,
&&\lim_{n\to\infty}\|[\psi_{l,n}(x),a]\| =0\:\textrm{ for }\: a\in A, x\in M_k,\\
&\lim_{n\to\infty}\inf_{\tau\in X_0}\sum_{l=0}^{c-1} \tau(\psi_{l,n}(1_k)) =1,
&&\lim_{n\to\infty}\|[\psi_{l,n}(x),\psi_{m,n}(y)]\| =0 \:\textrm{ for }\: x,y\in M_k, l\ne m; \nonumber
\end{align}
\item for each $n\in\nn$, a relatively open subset $W_{0,n}\subset X$ containing $X_0$ such that
\begin{equation} \label{wopen}
\inf_{\tau\in W_{0,n}}\sum_{l=0}^{c-1} \tau(\psi_{l,n}(1_k)) >1-\varepsilon_n, \quad \varepsilon_n \to 0;
\end{equation}
\item pairwise disjoint relatively open subsets $W_1,\ldots,W_N\subset X$ (obtained in a straightforward manner as a refinement of $\mathcal{V}$) such that $\overline{W_j} \subset U_{\tau_{l(j)}}$ for every $j$  and some $l(j)$, and such that $\{W_1,\ldots,W_N\}$ covers $X\backslash X_0$ (in particular, $\mathcal{W}_n:=\{W_{0,n},W_1,\ldots,W_N\}$ covers $X$ for every $n\in\nn$).
\end{items}
Take a partition of unity $\{f_{0,n}\}_{i=0}^N\subset C(\partial_e(T_1(A)))$ subordinate to $\mathcal{W}_n$, and use Proposition~\ref{algsim}(\ref{e}) to obtain central sequences $(a_{i,n,m})_{m=1}^\infty$ of positive contractions in $A$, $i=1,\ldots,N$, such that
\begin{equation} \label{rep2}
\lim_{m\to\infty} \sup_{\tau\in X} |\tau(a_{i,n,m})-f_{i,n}(\tau)|=0 \:\textrm{ and}\: \lim_{m\to\infty}\|a_{i,n,m}a_{j,n,m}\|=0, \: i\ne j.
\end{equation}
Fix an approximate unit $(h_m)_{m=1}^\infty$ for $A$. For every $n\in\nn$, choose a strictly increasing sequence $(k_{n,m})_{m=1}^\infty$ of positive integers such that (taking the supremum over $p\ge k_{n,m}$, $1\le j\le N$, $0\le l\le c-1$, $1\le m'\le m$ and $x$ in the unit ball of $M_k$):
\begin{equation} \label{sup}
\sup_{p,j,l,m',x} \max\{\|h_p^{1/2}a_{j,n,m'}-a_{j,n,m'}\|,  \|h_p^{1/2}\psi_{l,n}(x)-\psi_{l,n}(x)\|\} <\frac{1}{m}.
\end{equation}
Set $e_{n,m}:=h_{k_{n,m}}$, so that $(e_{n,m})_{m=1}^\infty$ is an approximate unit for $A$, and in particular represents an element of $A_\infty$. For each $n\in\nn$, since the $(a_{i,n,m})_{m=1}^\infty$ are pairwise orthogonal, we may choose positive contractions $a_{0,n,m}$, $m\in\nn$, in $A$ such that
\[
(a_{0,n,m})_{m=1}^\infty = \left(e_{n,m}^{1/2}\left(1-\sum_{i=1}^N a_{i,n,m}\right)e_{n,m}^{1/2}\right)_{m=1}^\infty \in A_\infty.
\]
Then, by (\ref{sup}), $(a_{0,n,m})_{m=1}^\infty$ commutes with $(a_{i,n,m})_{m=1}^\infty$ in $A_\infty $ for $i=1,\ldots,N$. Since $f_{0,n}=1-\sum_{i=1}^N f_{i,n}$, we also have, by (\ref{naw}) and (\ref{rep2}), that
\[
\lim_{m\to\infty} \sup_{\tau\in X} |\tau(a_{0,n,m})-f_{0,n}(\tau)|=0.
\]
By a diagonal argument, using separability of $A$, (\ref{sup}) and, for the last assertion, Proposition~\ref{algsim}(\ref{d}), we can find a subsequence $(m_n)_{n=1}^\infty$ of positive integers such that:
\begin{itemize}
\item $(a_{j,n,m_n})_{n=1}^\infty$, $j=0,1,\ldots,N$, are central sequences whose sum in $A_\infty$ is a contraction;
\item for $j=1,\ldots,N$, the $(a_{j,n,m_n})_{n=1}^\infty$ are pairwise orthogonal and all commute with $(a_{0,n,m_n})_{n=1}^\infty$ in $A_\infty$;
\item $(a_{j,n,m_n})_{n=1}^\infty$ commutes with $(\psi_{l,n}(x))_{n=1}^\infty$ in $A_\infty$ for $j=0,1,\ldots,N$, $l=0,\ldots,c-1$ and $x\in M_k$;
\item $\lim_{n} \sup_{\tau\in X} |\tau(a_{j,n,m_n})-f_{j,n}(\tau)|=0$ for $j=0,1,\ldots,N$;
\item $\lim_{n} \sup_{\tau\in X} |\tau(a_{0,n,m_n}\psi_{l,n}(x))-\tau(a_{0,n,m_n})\tau(\psi_{l,n}(x))| = 0$ for $l=0,\ldots,c-1$ and $x\in M_k$.
\end{itemize}
Then, using these properties together with (\ref{induction}) and (\ref{wopen}) and arguing just as in the zero dimensional case, the maps $\varphi_{l,n}:M_k\to A$, $l=0,\ldots,c$, defined by
\begin{align*}
\varphi_{l,n}(x) &:= a_{0,n,m_n}^{1/2}\psi_{l,n}(x)a_{0,n,m_n}^{1/2} \quad \textrm{for}\quad l=0,\ldots,c-1, \quad \textrm{and}\\
\varphi_{c,n}(x) &:= \sum_{j=1}^N a_{j,n,m_n}^{1/2}\varphi_{\tau_{l(j)}}(x)a_{j,n,m_n}^{1/2},
\end{align*}
can be shown to satisfy (\ref{diagonal}) (for $X$). Thus, by induction, there exists a uniformly tracially large order zero map $M_k\to A_\infty$. The statement about $\mathcal{Z}$-stability then follows from Theorem~\ref{msn}.
\end{proof}

Note that, as defined, uniformly tracially large order zero maps cannot be used to access algebraically simple $\cs$-algebras with non-compact tracial state space.

\begin{prop} \label{easy}
Suppose that $A$ is separable and algebraically simple, and that for some $k\in\nn$, there exists a uniformly tracially large order zero map $\Phi:M_k\to A^\infty$. Then $T_1(A)$ is compact.
\end{prop}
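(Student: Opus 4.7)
The plan is to extract from the uniformly tracially large order zero map a sequence of positive contractions in $A$ on which every tracial state is uniformly close to $1$, and then to use this sequence to show that every weak-* limit point of $T_1(A)$ is still a state (rather than having norm strictly less than $1$).

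Concretely, lift $\Phi$ to a c.p.c.\ order zero map $(\varphi_n)_{n=1}^\infty : M_k \to \ell^\infty(A)$ and set $a_n := \varphi_n(1_k) \in A^+$; each $a_n$ is a contraction and, by the definition of uniform tracial largeness,
\[
\inf_{\tau \in T_1(A)} \tau(a_n) \;\longrightarrow\; 1.
\]
Since $A$ is algebraically simple, Proposition~\ref{algsim}(\ref{a}) tells us that every $\tau \in T_1(A)$ is a bounded trace of norm $1$ on $A$, so $T_1(A)$ lies inside the closed unit ball of $A^*$. As $A = \ped(A)$, the weak-* topology on $T(A)$ inherited from $\ped(A)^*$ coincides with the subspace topology from $A^*$, so by Banach--Alaoglu it will suffice to prove that $T_1(A)$ is weak-* closed.

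To that end, let $\sigma \in A^*$ be a weak-* limit of a net $(\tau_\lambda)_\lambda \subset T_1(A)$. Positivity and the trace identity are preserved under weak-* limits, and the norm is weak-* lower semicontinuous, so $\sigma$ is a positive trace on $A$ with $\|\sigma\| \le 1$. The only point that needs proof is that $\|\sigma\| = 1$. Given $\varepsilon > 0$, choose $N$ with $\tau(a_N) \ge 1 - \varepsilon$ for every $\tau \in T_1(A)$; then $\sigma(a_N) = \lim_\lambda \tau_\lambda(a_N) \ge 1 - \varepsilon$, and since $\|a_N\| \le 1$ this forces $\|\sigma\| \ge 1 - \varepsilon$. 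Letting $\varepsilon \to 0$ gives $\|\sigma\| = 1$, so $\sigma \in T_1(A)$, and $T_1(A)$ is weak-* closed, hence compact.

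I do not anticipate any substantive obstacle: the argument is just the observation that uniform tracial largeness produces a sequence of contractions witnessing $\|\sigma\| \ge 1$ for every weak-* limit $\sigma$. The only spot that deserves a moment's care is the identification of the two weak-* topologies on $T_1(A)$, which is immediate from algebraic simplicity; one could alternatively phrase the entire argument intrinsically on $\ped(A) = A$ without mentioning $A^*$ at all.
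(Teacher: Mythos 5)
Your proof is correct, but it takes a genuinely different (and more elementary) route than the paper, so a comparison is worthwhile. Both arguments hinge on the same extracted data: positive contractions $a_n=\varphi_{m_n}(1_k)$ with $\tau(a_n)$ uniformly close to $1$ over $T_1(A)$. You use this to show that $T_1(A)$ is weak$^*$-closed in the unit ball of $A^*$ --- any limit point $\sigma$ of a net in $T_1(A)$ is a positive tracial functional with $\|\sigma\|\ge\sigma(a_N)\ge 1-\varepsilon$ for every $\varepsilon>0$, hence a tracial state --- and you conclude by Banach--Alaoglu, with algebraic simplicity entering only to identify the topology of $T(A)$ (pointwise convergence on $\ped(A)=A$) with the weak$^*$-topology of $A^*$ on the norm-bounded set $T_1(A)$. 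The paper instead passes to the compact base $T_{a\mapsto1}(A)$ of $T(A)$, uses Proposition~\ref{algsim}(\ref{b}) to bound $\|\sigma\|$ by some $M$ there, observes that the continuous functions $\sigma\mapsto\sigma(a_n)$ converge uniformly on $T_{a\mapsto1}(A)$ to the norm function $\omega_A$, deduces that $\omega_A$ is continuous on all of $T(A)$, and then invokes the equivalence (for algebras with only bounded traces) between continuity of $\omega_A$ and compactness of $T_1(A)$. Your route avoids $T_{a\mapsto1}(A)$, the bound $M$, and the $\omega_A$-continuity criterion; the paper's route yields the slightly stronger intermediate conclusion that the rank function $\omega_A=\iota([e])$ ($e$ strictly positive) is continuous on $T(A)$, which ties in with the theme of Section~\ref{further}. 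Two small remarks on your write-up: the fact that elements of $T_1(A)$ have norm $1$ is part of their definition, so Proposition~\ref{algsim}(\ref{a}) is not really what is needed there; and it is worth saying explicitly that a weak$^*$-limit of tracial states, being a bounded positive tracial functional, is automatically a densely finite lower semicontinuous trace and hence lies in $T(A)$, so that ``norm $1$'' is indeed the only thing left to check.
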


\begin{proof}
Let $a$ be a nonzero element of $A^+$ and choose (by (\ref{b}) of Proposition~\ref{algsim}) $M>0$ such that $\omega_A(\sigma):=\|\sigma\|<M$ for every $\sigma\in T_{a\mapsto1}$. Let $(\varphi_n)_{n=1}^\infty$ be a lift of $\Phi$ to c.p.c. order zero maps $M_k\to A$, and let $(\varepsilon_n)_{n=1}^\infty$ be a decreasing sequence of positive real numbers converging to zero. For every $n\in\nn$, let $m_n\in\nn$ be large enough so that $a_n:=\varphi_{m_n}(1_k)$ satisfies
\[
|\tau(a_n)-1|<\varepsilon_n \quad \textrm{for every } \: \tau\in T_1(A),
\]
hence
\[
|\sigma(a_n)-\|\sigma\||<M\varepsilon_n \quad \textrm{for every } \: \sigma\in T_{a\mapsto1}.
\]
Thus $\omega_A$ is a uniform limit of continuous functions, so is continuous on $T_{a\mapsto1}$. Since $T_{a\mapsto1}$ is a compact base of $T(A)$, it follows that $\omega_A$ is continuous on all of $T(A)$, and hence that $T_1(A)$ is compact.
\end{proof}

\section{Continuous rank functions} \label{further}

In this section, we consider circumstances under which we may drop the assumption of compactness of the tracial state space in the statement of Theorem~\ref{main}. Let us say that a simple, separable $\cs$-algebra $A$ with $T(A)\ne\{0\}$ \emph{has a continuous rank function} if $\iota(W(A))\cap\aff(T(A))\ne \emptyset$. This condition is trivially satisfied if $A$ contains a projection in its stabilization, or if $T_1(A)$ is a compact base of $T(A)$. Conversely, the following holds.

\begin{lemma} \label{morita}
If $A$ is separable, algebraically simple and has a continuous rank function, then there exist $k\in\nn$ and a hereditary subalgebra $B$ of $M_k(A)$ such that $T_1(B)$ is compact. 
\end{lemma}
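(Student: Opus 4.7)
The plan is to use the witness of the continuous rank function to carve out a hereditary subalgebra whose tracial state space is manifestly compact. I would fix $a\in M_k(A)^+$ (for some $k\in\nn$) satisfying $\iota([a])\in\aff(T(A))$, which is supplied by the hypothesis, and set $B:=\overline{aM_k(A)a}$, a separable hereditary subalgebra of $M_k(A)$ having $a$ as a strictly positive element. The goal is then to identify $T_1(B)$ with the base $\{\tau\in T(A):\iota([a])(\tau)=1\}$ of $T(A)$ and show the latter is compact.

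The first task is to set up the identification of trace cones. Since $A$ is algebraically simple, so is $M_k(A)=\ped(M_k(A))$, and unnormalized amplification $\tau\mapsto\tau_k$ gives a homeomorphism $T(A)\to T(M_k(A))$. Simplicity of $M_k(A)$ makes $B$ full, so by Brown's theorem $B$ is stably isomorphic to $M_k(A)$ and restriction $\sigma\mapsto\sigma|_B$ induces a bijection $T(M_k(A))\to T(B)$, which I would verify to be a homeomorphism of cones for the weak-$*$ topologies coming from $\ped(M_k(A))^*=M_k(A)^*$ and $\ped(B)^*$. Write $\tau\leftrightarrow\tau'$ for the resulting identification $T(A)\leftrightarrow T(B)$. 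Since $(a^{1/n})$ is an approximate unit for $B$, the norm on $T(B)$ computes as $\|\tau'\|=\lim_n\tau'(a^{1/n})=\lim_n\tau_k(a^{1/n})=\iota([a])(\tau)$. Consequently $\omega_B$ corresponds to the finite, continuous, strictly positive (off $0$) functional $\iota([a])$; in particular every trace on $B$ is bounded, and $T_1(B)$ is identified with $\{\tau\in T(A):\iota([a])(\tau)=1\}$.

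To see that this latter set is compact, I would appeal to Proposition 3.4 of \cite{Tikuisis:2012yq}: algebraic simplicity of $A$ provides a strictly positive (hence full, since $A$ is simple) element $e_A\in A^+=\ped(A)^+$, so $T_{e_A\mapsto 1}(A)$ is a compact base of $T(A)$. The continuous function $\iota([a])$ attains a positive minimum on this compact set, and similarly $\sigma\mapsto\sigma(e_A)$ is continuous and strictly positive on $\{\iota([a])=1\}$, so the radial rescalings $\sigma\mapsto\sigma/\iota([a])(\sigma)$ and $\rho\mapsto\rho/\rho(e_A)$ are mutually inverse homeomorphisms between $T_{e_A\mapsto 1}(A)$ and $\{\iota([a])=1\}$. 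Transporting back through the identification $T(A)\cong T(B)$ then yields compactness of $T_1(B)$.

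The only non-formal point in this outline is the weak-$*$-bicontinuity of the restriction bijection $T(M_k(A))\leftrightarrow T(B)$; this is essentially the tracial content of Brown's stable isomorphism theorem and is standard, but may require a short verification, for instance by fixing any full $b\in\ped(M_k(A))^+\cap B$ and matching both parametrizations against the common compact base $T_{b\mapsto 1}$.
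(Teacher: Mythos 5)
Your proposal is correct and takes essentially the same route as the paper: pass to the hereditary subalgebra $B\subseteq M_k(A)$ generated by a positive element $a$ with $\iota([a])\in\aff(T(A))$, identify $T(B)$ with $T(A)$ via restriction (the paper cites Cuntz--Pedersen where you invoke fullness and Brown's theorem), and observe that under this identification $\omega_B$ becomes the continuous function $\iota([a])$. The only divergence is that where the paper then appeals to the standing fact that continuity of the norm function (all traces being bounded) yields compactness of $T_1(B)$, you verify this directly by radially rescaling against the compact base $T_{e_A\mapsto 1}(A)$, which is a harmless, slightly more self-contained finish.
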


\begin{proof}
Let $b\in M_k(A)^+$ be such that $\iota([b])$ is continuous. Then the hereditary subalgebra $\overline{b(M_k(A))b}=:B$ generated by $b$ in $M_k(A)$ is also algebraically simple. Moreover, the restriction map $\rho_B:T(A)\to T(B)$ is a linear isomorphism (see for example \cite[Proposition 4.7]{Cuntz:1979fv}), so $\omega_B=\iota([b])\circ\rho_B^{-1}$ is continuous. Hence, $T_1(B)$ is compact.
\end{proof}

\begin{lemma} \label{rank}
If $A$ is separable, simple and has almost divisibility, strict comparison and stable rank one (and $T(A)\ne\{0\}$), then there exists an algebraically simple hereditary subalgebra $B$ of $A$ that has a continuous rank function.
\end{lemma}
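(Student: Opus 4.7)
The plan is to find a nonzero $b\in A^+$ such that (i) $B:=\overline{bAb}$ is algebraically simple, and (ii) the map $\tau\mapsto d_\tau(b)$ on $T(A)$ is continuous and finite-valued. Granted such $b$, the linear isomorphism $\rho_B\colon T(A)\to T(B)$ (\cite[Proposition 4.7]{Cuntz:1979fv}) converts (ii) into continuity of $\omega_B$ on $T(B)$; since $b$ is strictly positive in $B$, we then have $\iota([b])=\omega_B\in\aff(T(B))$, witnessing that $B$ has a continuous rank function.

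The construction of $b$ starts with any nonzero $c\in\ped(A)^+$, for which the functional $f\colon\tau\mapsto\tau(c)$ automatically lies in $\aff(T(A))$ (linear and weak$^*$ continuous on $T(A)\subseteq\ped(A)^*$, finite because $c\in\ped(A)$, and strictly positive on $T(A)\setminus\{0\}$ by faithfulness of traces on the simple algebra $A$). The goal is to realize $f$ as the dimension function of some $b\in A^+$. Here the three regularity hypotheses conspire: strict comparison makes $W(A)$ almost unperforated; almost divisibility, applied to $[c]$ and varying $m\in\nn$, produces classes $y_m\in W(A)$ with $my_m\le[c]\le(m+1)y_m$ and hence $d_\tau(y_m)\in[d_\tau(c)/(m+1),d_\tau(c)/m]$; and stable rank one ensures that Cuntz-subequivalences $[b']\le[c]$ can be realised by honest identities $v^*cv=(b'-\varepsilon)_+$ in $A$ (in the style of Rordam). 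Assembling suitable integer combinations of the $y_m$ and passing to a limit in the spirit of the Cuntz-semigroup classification results of Brown--Perera--Toms and Elliott--Robert--Santiago, one obtains $b\in A^+$ with $b\precsim c$ and $d_\tau(b)=f(\tau)$ for every $\tau\in T(A)$.

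For algebraic simplicity of $B$, the construction can be arranged so that $b\in\ped(\overline{cAc})^+$, which together with $b$ being strictly positive in $B$ places $b$ inside $\ped(B)^+$; in the worst case one replaces $b$ by $(b-\varepsilon)_+$ for a small $\varepsilon>0$, which is automatically in the Pedersen ideal and changes $d_\tau(b)$ only slightly, so that continuity and strict positivity persist. The main obstacle is the realization step: precisely producing $b\in A^+$ with $d_\tau(b)=f(\tau)$ requires careful use of the Cuntz-semigroup structure theory under stable rank one, and in the non-unital setting one must take extra care to keep $b$ within $A$ and its Pedersen ideal rather than only in the stabilization $A\otimes\mathcal{K}$. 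An alternative, possibly cleaner route would first realize the element inside $A\otimes\mathcal{K}$ using the classification of $\cu$ under stable rank one, then cut down to $M_k(A)$ for suitable $k$, and finally transfer the continuous-rank property to an algebraically simple hereditary subalgebra of $A$ via Brown's theorem and the Morita-invariance of the tracial cone and Cuntz semigroup.
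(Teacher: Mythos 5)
There is a genuine gap, and it sits exactly at the point you flag as needing to be ``arranged'': algebraic simplicity of the hereditary subalgebra. Your realization step (almost divisibility to approximate, strict comparison to order, stable rank one plus \cite[Theorem 4.4]{Brown:2008mz} to take suprema in $W$) is the same machinery the paper invokes via \cite[Theorem 5.22]{Ara:2009cs}, but your order of operations is reversed: you first produce $b$ with $d_\tau(b)=\tau(c)$ and only afterwards try to force $B=\overline{bAb}$ to be algebraically simple. The supremum/limit construction gives no control whatsoever placing $b$ in a Pedersen ideal, and your two repairs do not work. The fallback of replacing $b$ by $(b-\varepsilon)_+$ does put the hereditary subalgebra inside $\ped(A)$, but it destroys precisely the property you built: $\tau\mapsto d_\tau((b-\varepsilon)_+)$ is only lower semicontinuous, and it is not uniformly close to $d_\tau(b)$ for small $\varepsilon$ (the pointwise convergence $d_\tau((b-\varepsilon)_+)\uparrow d_\tau(b)$ is not uniform, and Dini does not apply since the approximants are not continuous), so continuity of the rank does not persist. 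The first repair (``arrange $b\in\ped(\overline{cAc})^+$, hence $b\in\ped(B)^+$, hence $B$ algebraically simple'') is asserted, not proved: neither the placement of $b$ in the Pedersen ideal nor the implication from ``strictly positive element of $\ped(B)$'' to $B=\ped(B)$ is justified by your construction. Your ``cleaner'' alternative has the same problem: realizing the rank in $A\otimes\mathcal{K}$ via $\cu$ and then ``cutting down to $M_k(A)$'' is exactly the move the paper's parenthetical remark warns against --- truncating an element of the stabilization changes its rank, and an element of $B\otimes\mathcal{K}$ need not generate an algebraically simple hereditary subalgebra; this is the whole reason the paper works with $W(B)$ (where stable rank one is needed for suprema) rather than with $\cu(B)$.

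The paper's fix is to decouple the two issues: choose, once and for all, a nonzero hereditary subalgebra $B$ of $A$ contained in $\ped(A)$ (for instance the one generated by $(c-\varepsilon)_+$), which is automatically algebraically simple and inherits strict comparison, almost divisibility and stable rank one; then run the Ara--Perera--Toms realization \emph{inside} $W(B)$, targeting an element of $\aff(T(B))$, using the compact base $T_{a\mapsto1}(B)$ and the nonunital representation results of Proposition~\ref{algsim} in place of the unital ones. Since the definition of a continuous rank function only asks for $\iota(W(B))\cap\aff(T(B))\neq\emptyset$, the realizing element may live in $M_\infty(B)$ and never has to generate the hereditary subalgebra itself; the passage to a subalgebra with compact tracial state space is then done separately (Lemma~\ref{morita} and Brown's theorem). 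If you reorganize your argument this way --- fix $B\subseteq\ped(A)$ first, then realize the rank in $W(B)$ --- your sketch of the realization step becomes essentially the paper's proof; as written, the algebraic-simplicity step fails.
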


\begin{proof}
Let $B$ be a (nonzero) hereditary subalgebra of $A$ contained in $\ped(A)$, so that $B$ is algebraically simple. Note that $B$ inherits stable rank one, strict comparison and almost divisibility (see also the proof of \cite[Corollary 8.8]{Tikuisis:2012kx}). Then, arguing exactly as in \cite[Theorem 5.22]{Ara:2009cs}, one can show that $\iota(W(B))$ contains all of $\aff(T(B))$. Note that one has to work not with $T_1(B)$ but with $T_{a\mapsto1}(B)$ for some fixed $a\in B^+\backslash\{0\}$, and make use of (\ref{b}) and a version of (\ref{c}) of Proposition~\ref{algsim}, namely:
\begin{itemize}
\item $\sup\{\|\tau\| \mid \tau \in T_{a\mapsto1}(B)\} <\infty$, and
\item every strictly positive continuous affine functional on $T_{a\mapsto1}(B)$ is represented by some positive element of $B$ (in a norm-controlled way).
\end{itemize}
Briefly, the idea is as follows.

Almost divisibility is used to show that, for any $\varepsilon>0$ and $f\in\aff(T(B))$, there exists some $b\in M_\infty(B)^+$ such that $\iota([b])$ is uniformly close to $f$ on $T_{a\mapsto1}(B)$, within $\varepsilon$ (see \cite[Lemma 5.20]{Ara:2009cs}). Then, for a given $g\in\aff(T(B))$, one can write $g$ as the pointwise supremum of an increasing sequence $(f_n)\subset \aff(T(B))$ in such a way that, taking $b_n\in M_\infty(B)^+$ with $\iota([b_n])$ approximating $f_n$, one has $d_\tau([b_n])<d_\tau([b_{n+1}])$ for every $\tau\in T_{a\mapsto1}(B)$ (see \cite[Lemma 5.21]{Ara:2009cs}). The sequence $([b_n])_{n=1}^\infty$ is increasing (by strict comparison) and  bounded (since $\iota([b_n])$ converges pointwise to $g$). Therefore, since $B$ has stable rank one, \cite[Theorem 4.4]{Brown:2008mz} (see also \cite[Theorem 5.15]{Ara:2009cs}) implies that $[b]:=\sup[b_n]$ exists in $W(B)$ and satisfies $\iota([b])=g$.
\end{proof}

(Note that it is important for the continuous rank function to be induced by an element $b$ of some $M_k(B)$, rather than $B\otimes \mathcal{K}$, so that $b$ generates an \emph{algebraically} simple hereditary subalgebra. That is why we work with $W(B)$ rather than $\cu(B)\cong W(B\otimes\mathcal{K})$, even though (see \cite{Coward:2008rz}) suprema of increasing sequences exist in $\cu$ without the assumption of stable rank one.)

\begin{corollary} \label{cor}
Let $A$ be a simple, separable, nuclear, nonelementary $\cs$-algebra with strict comparison, whose cone of densely finite traces has as a base a Choquet simplex with compact, finite dimensional extreme boundary. Suppose also that $A$ has an algebraically simple hereditary subalgebra with a continuous rank function (for example, if $A$ has almost divisibility and stable rank one). Then $A$ is $\mathcal{Z}$-stable.
\end{corollary}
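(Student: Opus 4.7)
The plan is to reduce the corollary to Theorem~\ref{main} by passing to an auxiliary hereditary subalgebra $B$ whose tracial state space is compact, verifying that the remaining hypotheses descend to $B$, and then transferring $\mathcal{Z}$-stability back to $A$ via Morita equivalence.

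First, by hypothesis (or, in the case of almost divisibility and stable rank one, by Lemma~\ref{rank}), $A$ contains an algebraically simple hereditary subalgebra $A_0$ with a continuous rank function. Applying Lemma~\ref{morita} to $A_0$ produces some $k\in\nn$ and a hereditary subalgebra $B\subseteq M_k(A_0)$ for which $T_1(B)$ is compact; the construction in the proof of Lemma~\ref{morita}, as the hereditary subalgebra generated by the positive element realising the continuous rank function, further ensures that $B$ is itself algebraically simple. The remaining structural hypotheses of Theorem~\ref{main}---separability, nuclearity, simplicity, nonelementarity and strict comparison---all pass from $A$ to $M_k(A_0)$ and thence to the hereditary subalgebra $B$.

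Next, I would verify the tracial hypotheses of Theorem~\ref{main} for $B$. Since $B$ is simple, $T(B)$ is a vector lattice, so the compact base $T_1(B)$ is automatically a Choquet simplex. The chain of passages $A\supseteq A_0 \hookrightarrow M_k(A_0) \supseteq B$ induces a linear isomorphism of cones $T(A)\cong T(B)$ via restriction to full hereditary subalgebras (see \cite[Proposition 4.7]{Cuntz:1979fv}, as used in the proof of Lemma~\ref{morita}) and matrix amplification. Under this isomorphism, the prescribed compact base of $T(A)$ and the base $T_1(B)$ of $T(B)$ correspond to two different compact bases of the same abstract cone; and any two compact bases of a cone have homeomorphic extreme boundaries, via the continuous rescaling projection $C\setminus\{0\}\to X$ recalled in the Preliminaries. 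Hence $\partial_e(T_1(B))$ is homeomorphic to the prescribed finite-dimensional compact extreme boundary of $T(A)$, so is itself compact and finite dimensional.

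Theorem~\ref{main} applied to $B$ now yields that $B$ is $\mathcal{Z}$-stable. Since $B$ is a full hereditary subalgebra of the simple $\cs$-algebra $M_k(A)$, it is stably isomorphic to $A$ by Brown's theorem \cite{Brown:1977kq}, and $\mathcal{Z}$-stability is preserved under stable isomorphism of separable $\cs$-algebras; hence $A$ is itself $\mathcal{Z}$-stable. The main obstacle in this outline is the tracial bookkeeping of the second paragraph: once the cone isomorphism $T(A)\cong T(B)$ is made explicit, the compactness and finite-dimensionality of the extreme boundary transfer via the general (but elementary) observation that any two compact bases of a single cone share the same extreme boundary up to homeomorphism.
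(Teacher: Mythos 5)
Your proposal is correct and follows essentially the same route as the paper: pass (via Lemma~\ref{rank} if needed and then Lemma~\ref{morita}) to an algebraically simple hereditary subalgebra $B$ with $T_1(B)$ compact, apply Theorem~\ref{main} to $B$, and transfer $\mathcal{Z}$-stability back to $A$ through Brown's theorem and stable isomorphism. The only difference is that you spell out the transfer of the compact, finite-dimensional extreme boundary to $T_1(B)$ (via the cone isomorphism and the fact that any two compact bases of a cone have homeomorphic extreme boundaries), a verification the paper leaves implicit; your argument for it is sound.
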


\begin{proof}
By Lemma~\ref{morita} and Brown's theorem \cite{Brown:1977kq}, there is an algebraically simple $\cs$-algebra $B$ that is stably isomorphic to $A$ and satisfies the hypotheses of Theorem~\ref{main}. The result then follows because $\mathcal{Z}$-stability is preserved under stable isomorphism \cite{Toms:2007uq}.
\end{proof}

\begin{remark}
Simple, nonelementary, approximately subhomogeneous $\cs$-algebras with no dimension growth have strict comparison and almost divisible Cuntz semigroup (see \cite[Corollary 5.9]{Tikuisis:2012ud} and \cite[Corollary 7.2]{Tikuisis:2012yq}), and also have stable rank one (see \cite[Theorem 3.2]{Santiago:2012fk}). Therefore, such algebras are $\mathcal{Z}$-stable whenever the assumption on the tracial cone also holds. This has already been shown in \cite[Corollary 9.2]{Tikuisis:2012kx}, where no such assumption is necessary, and where `no dimension growth' can be replaced by `slow dimension growth'. 
\end{remark}


\begin{thebibliography}{10}

\bibitem{Alfsen:1971hl}
E.~M. Alfsen, \emph{Compact convex sets and boundary integrals},
  Springer-Verlag, New York, 1971, Ergebnisse der Mathematik und ihrer
  Grenzgebiete, Band 57. \MR{0445271 (56 \#3615)}

\bibitem{Ara:2009cs}
P. Ara, F.~Perera, and A.~S. Toms, \emph{{$K$}-theory for operator
  algebras. {C}lassification of {$\cs$}-algebras}, Aspects of operator algebras
  and applications, Contemp. Math., vol. 534, Amer. Math. Soc., Providence, RI,
  2011, pp.~1--71. \MR{2767222 (2012g:46092)}

\bibitem{Blackadar:1980zr}
B.~E. Blackadar, \emph{Traces on simple {AF} {$\cs$}-algebras}, J.
  Funct. Anal. \textbf{38} (1980), no.~2, 156--168. \MR{587906 (82a:46062)}

\bibitem{Brown:1977kq}
L.~G. Brown, \emph{Stable isomorphism of hereditary subalgebras of
  {$\cs$}-algebras}, Pacific J. Math. \textbf{71} (1977), no.~2, 335--348.
  \MR{0454645 (56 \#12894)}
  
\bibitem{Brown:2008mz}
N.~P. Brown, F.~Perera and A.~S. Toms, \emph{The {C}untz semigroup, the {E}lliott conjecture, and dimension functions on {$\cs$}-algebras}, J. Reine Angew. Math. \textbf{621} (2008), 191--211. \MR{2431254 (2010a:46125)}
  
\bibitem{Coward:2008rz}
K.~T. Coward, G.~A. Elliott and C.~Ivanescu, \emph{The {C}untz semigroup as an invariant   for {$\cs$}-algebras}, {J. Reine Angew. Math.} \textbf{623} (2008), 161--193.
  \MR{2458043 (2010m:46101)}

\bibitem{Cuntz:1979fv}
J.~Cuntz and G.~K. Pedersen, \emph{Equivalence and traces on
  {$\cs$}-algebras}, J. Funct. Anal. \textbf{33} (1979), no.~2, 135--164.
  \MR{546503 (80m:46053)}

\bibitem{Dadarlat:2010nr}
M.~Dadarlat and A.~S. Toms, \emph{Ranks of operators in simple
  {$\cs$}-algebras}, J. Funct. Anal. \textbf{259} (2010), no.~5, 1209--1229.
  \MR{2652186 (2012e:46111)}

\bibitem{Elliott:2009kq}
G.~A. Elliott, L.~Robert, and L.~Santiago, \emph{The cone of lower
  semicontinuous traces on a {$\cs$}-algebra}, Amer. J. Math. \textbf{133}
  (2011), no.~4, 969--1005. \MR{2823868 (2012f:46120)}

\bibitem{Engelking:1978uq}
R.~Engelking, \emph{Dimension theory}, North-Holland Publishing Co.,
  Amsterdam, 1978, Translated from the Polish and revised by the author,
  North-Holland Mathematical Library, 19. \MR{0482697 (58 \#2753b)}

\bibitem{Jacelon:2010fj}
B.~Jacelon, \emph{A simple, monotracial, stably projectionless
  {$\cs$}-algebra}, J. London Math. Soc. (first published online October 28,
  2012).

\bibitem{Kirchberg:2006fk}
E.~Kirchberg, \emph{Central sequences in {$\cs$}-algebras and strongly
  purely infinite algebras}, Operator {A}lgebras: {T}he {A}bel {S}ymposium
  2004, Abel Symp., vol.~1, Springer, Berlin, 2006, pp.~175--231. \MR{2265050
  (2009c:46075)}

\bibitem{Kirchberg:2012uq}
E.~Kirchberg and M.~R{\o}rdam, \emph{Central sequence
  {$\cs$}-algebras and tensorial absorption of the {J}iang-{S}u algebra},
  arXiv:1209.5311, to appear in J. Reine Angew. Math. (2012).

\bibitem{Lin:2007qf}
H.~Lin, \emph{Simple nuclear {$\cs$}-algebras of tracial topological
  rank one}, J. Funct. Anal. \textbf{251} (2007), no.~2, 601--679.
  \MR{2356425 (2008k:46164)}

\bibitem{Loring:1997it}
T.~A. Loring, \emph{Lifting solutions to perturbing problems in
  {$\cs$}-algebras}, Fields Institute Monographs, vol.~8, American Mathematical
  Society, Providence, RI, 1997. \MR{1420863 (98a:46090)}

\bibitem{Matui:2012qv}
H.~Matui and Y.~Sato, \emph{Strict comparison and
  {$\mathcal{Z}$}-absorption of nuclear {$\cs$}-algebras}, Acta Math.
  \textbf{209} (2012), no.~1, 179--196. \MR{2979512}

\bibitem{Nawata:2012fk}
N.~Nawata, \emph{Picard groups of certain stably projectionless
  {$\cs$}-algebras}, arXiv:1207.1930, to appear in J. London Math. Soc. (2012).

\bibitem{Pedersen:1979rt}
G.~K. Pedersen, \emph{{$\cs$}-algebras and their automorphism groups},
  London Mathematical Society Monographs, vol.~14, Academic Press Inc., London,
  1979. \MR{548006 (81e:46037)}

\bibitem{Pedersen:1966zr}
\bysame, \emph{Measure theory for {$\cs$} algebras},
  Math. Scand. \textbf{19} (1966), 131--145. \MR{0212582 (35 \#3453)}

\bibitem{Pedersen:1969kq}
\bysame, \emph{Measure theory for {$\cs$} algebras. {III}}, Math. Scand.
  \textbf{25} (1969), 71--93. \MR{0259627 (41 \#4263)}

\bibitem{Phelps:2001rz}
R.~R. Phelps, \emph{Lectures on {C}hoquet's theorem}, second ed., Lecture
  Notes in Mathematics, vol. 1757, Springer-Verlag, Berlin, 2001. \MR{1835574
  (2002k:46001)}

\bibitem{Robert:2010qy}
L.~Robert, \emph{Classification of inductive limits of 1-dimensional {NCCW}
  complexes}, Adv. Math. \textbf{231} (2012), no.~5, 2802--2836. \MR{2970466}

\bibitem{Rordam:2004kq}
M.~R{\o}rdam, \emph{The stable and the real rank of
  {$\mathcal{Z}$}-absorbing {$\cs$}-algebras}, Internat. J. Math.
  \textbf{15} (2004), no.~10, 1065--1084. \MR{2106263 (2005k:46164)}

\bibitem{Rordam:2009qy}
M.~R{\o}rdam and W.~Winter, \emph{The {J}iang-{S}u algebra revisited},
  J. Reine Angew. Math. \textbf{642} (2010), 129--155. \MR{2658184 (2011i:46074)}

\bibitem{Santiago:2012fk}
L.~Santiago, \emph{Reduction of the dimension of nuclear {$\cs$}-algebras},
  arXiv:1211.7159 (2012).

\bibitem{Sato:2012fj}
Y.~Sato, \emph{Trace spaces of simple nuclear {$\cs$}-algebras with
  finite-dimensional extreme boundary}, arXiv:1209.3000 (2012).

\bibitem{Tikuisis:2012kx}
A.~Tikuisis, \emph{Nuclear dimension, {$\mathcal{Z}$}-stability, and
  algebraic simplicity for stably projectionless {$\cs$}-algebras},
  arXiv:1210.2237, to appear in Math. Ann. (2012).

\bibitem{Tikuisis:2012ud}
\bysame, \emph{Regularity for stably projectionless, simple
  {$\cs$}-algebras}, J. Funct. Anal. \textbf{263} (2012), no.~5, 1382--1407.
  \MR{2943734}

\bibitem{Tikuisis:2012yq}
A.~Tikuisis and A.~S. Toms, \emph{On the structure of the {C}untz
  semigroup in (possibly) nonunital {$\cs$}-algebras}, arXiv:1210.2235 (2012).

\bibitem{Toms:2012qy}
A.~S. Toms, S.~White, and W.~Winter, \emph{$\mathcal{Z}$-stability
  and finite dimensional tracial boundaries}, arXiv:1209.3292 (2012).

\bibitem{Toms:2005kq}
A.~S. Toms and W.~Winter, \emph{{$\mathcal{Z}$}-stable {ASH}
  algebras}, Canad. J. Math. \textbf{60} (2005), no.~3, 703--720. \MR{2414961
  (2009m:46089)}

\bibitem{Toms:2007uq}
\bysame, \emph{Strongly self-absorbing {$\cs$}-algebras}, Trans. Amer. Math.
  Soc. \textbf{359} (2007), no.~8, 3999--4029 (electronic). \MR{2302521
  (2008c:46086)}

\bibitem{Winter:2012pi}
W.~Winter, \emph{Nuclear dimension and {$\mathcal{Z}$}-stability of pure
  {$\cs$}-algebras}, Invent. Math. \textbf{187} (2012), no.~2, 259--342.
  \MR{2885621}

\bibitem{Winter:2009sf}
W.~Winter and J.~Zacharias, \emph{Completely positive maps of order
  zero}, M{\"u}nster J. Math. \textbf{2} (2009), 311--324. \MR{2545617
  (2010g:46093)}

\end{thebibliography}
\end{document}